\documentclass[12pt]{article}

\usepackage{fullpage}
\usepackage{amssymb}
\usepackage{amsmath}
\usepackage{amsthm}
\usepackage[usenames]{color}
\usepackage[colorlinks=true,
linkcolor=webgreen,
filecolor=webbrown,
citecolor=webgreen]{hyperref}
\definecolor{webgreen}{rgb}{0,.5,0}
\definecolor{webbrown}{rgb}{.6,0,0}

\theoremstyle{plain}
\newtheorem{theorem}{Theorem}
\newtheorem{corollary}[theorem]{Corollary}
\newtheorem{lemma}[theorem]{Lemma}
\newtheorem{proposition}[theorem]{Proposition}
\theoremstyle{definition}

\theoremstyle{remark}
\newtheorem{remark}[theorem]{Remark}

\newcommand{\seqnum}[1]{\href{https://oeis.org/#1}{\underline{#1}}}

\begin{document}

\title{A Continued-Fraction Criterion for the Flint Hills\\
Series, and the Integer Sequences It Generates}

\author{Carlos H. L\'opez Zapata\\
Szczecin 70-214\\
Poland\\
\href{mailto:mathematics.edu.research@zohomail.eu}{\tt mathematics.edu.research@zohomail.eu}}

\date{}

\maketitle

\begin{abstract}
The Flint Hills series is the sum of the reciprocals of the cube of an
integer times the square of the sine of that integer, taken over the
positive integers, and whether it converges is a well-known open problem,
tied to the irrationality measure of pi. We show that its convergence is
governed entirely by an integer sequence, namely the sequence of partial
quotients of the continued fraction of the reciprocal of pi. We prove that
the series converges if and only if the sum of the squares of consecutive
partial quotients, each divided by the corresponding convergent
denominator, is finite. This transforms an analytic problem into an
explicit statement about the growth of a single integer sequence, already
catalogued by Sloane, and about the sequence of convergent denominators.
We isolate the finite integer spikes produced by the exceptionally large
partial quotient at position five, we describe the indices at which such
spikes occur, and we show, using the Gauss--Kuzmin statistics of partial
quotients, that for almost every real number the corresponding series
converges. The reciprocal of pi is thus a single arithmetic point inside a
set of measure zero.
\end{abstract}

\section{Introduction}
\label{sec:intro}

The Flint Hills series
\begin{equation}
\label{eq:flinthills}
S = \sum_{n=1}^{\infty} \frac{1}{n^3 \sin^2 n}
= \sum_{n=1}^{\infty} \frac{\csc^2 n}{n^3},
\end{equation}
popularized by Pickover \cite{Pickover}, is a well-known test case at the
interface of analysis and Diophantine approximation. Its terms are
erratic: whenever an integer $n$ is close to an integer multiple of $\pi$,
the factor $\csc^2 n$ is large. The largest early spike occurs at
$n = 355$, because $355/113$ is an exceptionally good rational
approximation of $\pi$; the single term $n = 355$ already contributes
about $24.7$ to \eqref{eq:flinthills}. Whether \eqref{eq:flinthills}
converges is a well-known open problem.

The obstruction is arithmetic, and lies in the continued fraction of
$\pi$. We let $\|x\|$ denote the distance from a real number $x$ to the
nearest integer, and we recall that the irrationality measure of an
irrational number $\alpha$ is
\begin{equation}
\label{eq:measure}
\mu(\alpha) = \inf \left\{ \mu > 0 :
\left| \alpha - \frac{p}{q} \right| < q^{-\mu}
\text{ for at most finitely many } p/q \right\}.
\end{equation}
Alekseyev \cite{Alekseyev} proved that the series \eqref{eq:flinthills}
converges if $\mu(\pi) < 5/2$, and that convergence forces
$\mu(\pi) \leq 5/2$. The best known bound is $\mu(\pi) \leq 7.103205$, due
to Zeilberger and Zudilin \cite{ZeilbergerZudilin}, improving Salikhov
\cite{Salikhov}, while the expected value is $\mu(\pi) = 2$. The
convergence of \eqref{eq:flinthills} therefore remains undecided.

The purpose of this paper is to show that the convergence of $S$ is
controlled entirely by a single integer sequence: the partial quotients of
the continued fraction of $1/\pi$, catalogued as \seqnum{A001203}, together
with the associated convergent denominators, catalogued as \seqnum{A002485}.
We let $\alpha = 1/\pi$, with continued fraction
$\alpha = [0; a_1, a_2, a_3, \ldots]$ and convergents $p_k/q_k$, so that
$q_0 = 1$, $q_1 = a_1$, and
\begin{equation}
\label{eq:qrec}
q_{k+1} = a_{k+1} q_k + q_{k-1}
\qquad (k \geq 1).
\end{equation}
Our main result is the following criterion.

\begin{theorem}
\label{thm:main}
The Flint Hills series \eqref{eq:flinthills} converges if and only if
\begin{equation}
\label{eq:criterion}
\sum_{k=1}^{\infty} \frac{a_{k+1}^2}{q_k} < \infty,
\end{equation}
where $a_k$ are the partial quotients of $1/\pi$ and $q_k$ the
corresponding convergent denominators.
\end{theorem}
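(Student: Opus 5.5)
The plan is to pass from $\sin n$ to the distance $\|n\alpha\|$ with $\alpha = 1/\pi$. Write $n = m\pi + \theta$ with $|\theta| \le \pi/2$, where $m$ is the integer nearest to $n/\pi$. Then $|\sin n| = |\sin\theta|$ and $|\theta| = \pi\|n\alpha\|$. The elementary bounds $\tfrac{2}{\pi}|\theta| \le |\sin\theta| \le |\theta|$ give
\begin{equation*}
2\|n\alpha\| \le |\sin n| \le \pi\|n\alpha\|.
\end{equation*}
So $S$ converges if and only if $T = \sum_{n\ge1} n^{-3}\|n\alpha\|^{-2}$ converges, and the theorem becomes a purely Diophantine statement about $\alpha$. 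I will use the standard continued-fraction facts:
\begin{equation*}
\frac{1}{(a_{k+1}+2)q_k} \le \frac{1}{q_{k+1}+q_k} < \|q_k\alpha\| < \frac{1}{q_{k+1}} \le \frac{1}{a_{k+1}q_k},
\end{equation*}
together with the best-approximation property: $\|n\alpha\| \ge \|q_k\alpha\|$ whenever $0<n<q_{k+1}$.

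\emph{Necessity.} This direction is immediate. The single term $n=q_k$ satisfies
\begin{equation*}
\frac{1}{q_k^3\|q_k\alpha\|^2} \ge \frac{a_{k+1}^2}{q_k}.
\end{equation*}
The $q_k$ are distinct for $k\ge1$ (they are strictly increasing from $k=1$ on). Hence $T \ge \sum_{k\ge1} a_{k+1}^2/q_k$, and convergence of $T$ forces \eqref{eq:criterion}.

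\emph{Sufficiency.} I would partition $[q_1,\infty)$ into the ranges $I_k=[q_k,q_{k+1})$. Each $I_k$ is further cut into consecutive blocks $B_{k,m}=[mq_k,(m+1)q_k)$ for $1\le m\le a_{k+1}$, the last block being truncated. The finitely many $n<q_1$ contribute a finite amount.

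Inside one block, the $q_k$ points $n\alpha \bmod 1$ are well spaced: since $n\alpha = n\,p_k/q_k + n\delta$ with $|\delta|<1/(q_kq_{k+1})$, they are perturbations of the $q_k$ distinct multiples of $1/q_k$. The perturbation drifts by less than $1/q_{k+1}$ across the block. Consequently, after discarding the one or two points nearest to $0$, the $j$-th smallest value of $\|n\alpha\|$ is $\gg j/q_k$. The non-exceptional $n$ in $B_{k,m}$ therefore contribute
\begin{equation*}
\ll (mq_k)^{-3}\sum_j \frac{q_k^2}{j^2} \ll \frac{1}{m^3 q_k}.
\end{equation*}
Summing over $m$ gives $O(1/q_k)$. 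Since $q_k$ grows at least like a Fibonacci sequence, the sum over $k$ is finite.

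The exceptional $n$ in $B_{k,m}$ are those with $\|n\alpha\|$ of size comparable to $\|q_k\alpha\|$ or $1/q_{k+1}$. The best-approximation property gives $\|n\alpha\|\ge\|q_k\alpha\|$ for all $n\in I_k$. This only yields a bound $\ll a_{k+1}^2q_k^2\cdot(mq_k)^{-3}$ per block, and summing over $m$ gives $\ll a_{k+1}^2/q_k$. That already suffices under \eqref{eq:criterion}.

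The main obstacle is making this exceptional-term bookkeeping rigorous. One must show that each block has only $O(1)$ points with $\|n\alpha\|\ll 1/q_k$, and that those points obey $\|n\alpha\|\gg\|q_k\alpha\|$. The fine structure (for example $n=mq_k$, where $\|n\alpha\|=m\|q_k\alpha\|$, or $n=mq_k+q_{k-1}$) is best handled through the Ostrowski expansion of $n$, or via the three-distance theorem. I would first try to prove a clean lemma: for $n\in B_{k,m}$ other than these $O(1)$ special points, $\|n\alpha\|\ge c/q_k$. I expect this lemma to follow from writing $n=mq_k+r$ with $0\le r<q_k$ and using $\|r\alpha\|\ge\|q_{k-1}\alpha\|\ge 1/(2q_k)$ for $0<r<q_k$. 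The remaining task is to control the shift $m\|q_k\alpha\|$, which is at most $a_{k+1}/q_{k+1}<1/q_k$.
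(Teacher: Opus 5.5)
Your proof is correct, but the sufficiency half takes a finer route than the paper's. The paper never subdivides $[q_k,q_{k+1})$. For distinct $n,n'$ in this range one has $0<|n-n'|<q_{k+1}$, so best approximation makes all the points $n\alpha \bmod 1$ of the range pairwise $\|q_k\alpha\|$-separated. Hence the $j$-th smallest $\|n\alpha\|$ is at least $(j-1)\|q_k\alpha\|/2$. With the crude weight $n^{-3}\le q_k^{-3}$, the whole range then contributes at most $(1+4\zeta(2))\|q_k\alpha\|^{-2}q_k^{-3}\asymp a_{k+1}^2/q_k$, and no exceptional points need to be isolated.

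Your sub-blocks $B_{k,m}$ and the near-lattice structure cost more bookkeeping but give more information. The bulk of each range contributes only $O(1/q_k)$, independently of $a_{k+1}$, so any divergence is carried by $O(1)$ points per sub-block (essentially $n=mq_k$ and $n=mq_k+q_{k-1}$). Your multiplicative reduction $2\|n\alpha\|\le|\sin n|\le\pi\|n\alpha\|$ is also simpler than Lemma~\ref{lem:cancel}, whose additive comparison is more than the theorem needs.

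The step you flag as the main obstacle is already settled by your own perturbation estimate, provided you use the total shift and not just the drift within a block. For every $n<q_{k+1}$ one has $|n\delta|<n/(q_kq_{k+1})<1/q_k$. So each point of $B_{k,m}$ lies within $1/q_k$ of its lattice point $j/q_k$, where $j\equiv np_k \pmod{q_k}$. Residues with $\min(j,q_k-j)\ge 2$ then satisfy $\|n\alpha\|\ge \min(j,q_k-j)/(2q_k)$. The at most three others are handled by $\|n\alpha\|\ge\|q_k\alpha\|$.

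The ``clean lemma'' you sketch instead would not close as stated. The bound $\|r\alpha\|\ge 1/(2q_k)$ minus a shift of up to $1/q_k$ yields nothing. Moreover, $r=q_{k-1}$ is a genuine exception, so you need the lattice position of every $r\alpha$, not merely the minimum $\|q_{k-1}\alpha\|$.
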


Theorem \ref{thm:main} recovers the criterion of Alekseyev from an explicit
and elementary statement about integer sequences, and it isolates the
arithmetic obstruction in a form that can be inspected term by term. The
famous partial quotient $a_5 = 292$, which produces the convergent
$355/113$ of $\pi$, contributes the single large term
$a_5^2/q_4 = 292^2/355 = 240.18\ldots$ to the sum \eqref{eq:criterion}; but
a finite number of such spikes cannot decide convergence, which depends
only on the asymptotic growth of the partial quotients.

Beyond the criterion itself, we study the integer sequences that
\eqref{eq:criterion} generates: the sequence of integer parts of the terms
$a_{k+1}^2/q_k$, which is zero except at the rare indices where a partial
quotient is large, and the sequence of those indices, which we call the
dangerous indices of $1/\pi$. We also prove, in
Section \ref{sec:metric}, that for Lebesgue-almost every real number the
analogous series converges, so that $1/\pi$ is a single point in a set of
measure zero.

The paper is organized as follows. Section \ref{sec:reduction} reduces the
trigonometric series \eqref{eq:flinthills} to a Diophantine series in the
partial quotients. Section \ref{sec:criterion} proves Theorem
\ref{thm:main} and the associated threshold on the growth of the partial
quotients. Section \ref{sec:sequences} studies the integer sequences
produced by the criterion. Section \ref{sec:metric} establishes the
metric result. Section \ref{sec:numerics} records the numerical
computations, and Section \ref{sec:ack} contains the acknowledgments.

\section{Reduction to a Diophantine series}
\label{sec:reduction}

The singular behavior of \eqref{eq:flinthills} is governed by the distance
from $n/\pi$ to the nearest integer. We first record the elementary
comparison that removes the trigonometric factor.

\begin{lemma}
\label{lem:cancel}
Let $h(\theta) = \csc^2 \theta - \theta^{-2}$ for
$\theta \in (0, \pi/2]$. Then $h$ extends continuously to $[0, \pi/2]$
with $h(0) = 1/3$, is increasing, and satisfies
$1/3 \leq h(\theta) \leq 1 - 4/\pi^2$. Consequently, for every positive
integer $n$,
\begin{equation}
\label{eq:gn}
\csc^2 n - \frac{1}{\pi^2 \|n/\pi\|^2}
= h(\pi \|n/\pi\|) \in \left[ \frac{1}{3}, \, 1 - \frac{4}{\pi^2} \right].
\end{equation}
\end{lemma}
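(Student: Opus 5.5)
The plan has two parts: first the analytic properties of $h$ on $(0,\pi/2]$, then the reduction of $\csc^2 n$ to an argument in $(0,\pi/2]$ using the symmetries of $\sin^2$.

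\emph{Behaviour at $0$.} I would use the Laurent expansion $\csc^2\theta = \theta^{-2} + \tfrac13 + \tfrac{1}{15}\theta^2 + O(\theta^4)$, obtained by squaring $\sin\theta = \theta(1-\theta^2/6+\theta^4/120-\cdots)$ and inverting. It gives $h(\theta)\to 1/3$, so setting $h(0)=1/3$ yields a continuous extension to $[0,\pi/2]$.

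\emph{Monotonicity.} This is the main obstacle. Differentiating gives
\[
h'(\theta) = \frac{2}{\theta^3} - \frac{2\cos\theta}{\sin^3\theta},
\]
so $h'>0$ on $(0,\pi/2]$ is equivalent to $\sin^3\theta > \theta^3\cos\theta$, that is, $\sin\theta\,(\cos\theta)^{-1/3} > \theta$. I would prove this by letting $g(\theta)=\sin\theta\,(\cos\theta)^{-1/3}-\theta$ on $[0,\pi/2)$. Then $g(0)=0$ and
\[
g'(\theta) = (\cos\theta)^{2/3} + \tfrac13(\cos\theta)^{-4/3}\sin^2\theta - 1 .
\]
With $c=\cos\theta\in(0,1)$ and $\sin^2\theta = 1-c^2$, the inequality $g'>0$ becomes $c^{2/3}+\tfrac13 c^{-4/3}-\tfrac13 c^{2/3}>1$, i.e.\ $\tfrac23 c^{2/3}+\tfrac13 c^{-4/3}>1$. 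This is weighted AM--GM: $\tfrac23 x+\tfrac13 y \ge x^{2/3}y^{1/3}$ with $x=c^{2/3}$ and $y=c^{-4/3}$, whose geometric mean is $1$. Equality would force $x=y$, i.e.\ $c=1$, so the inequality is strict for $c<1$. At $\theta=\pi/2$ we have $h'(\pi/2)=2/\theta^3>0$ directly. Hence $h$ is strictly increasing, and the bounds follow: $h(0)=1/3$ and $h(\pi/2)=1-4/\pi^2$.

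\emph{Application to integers $n$.} Let $m$ be the integer nearest to $n/\pi$ and put $\theta = \pi\|n/\pi\| = |n-m\pi|\in(0,\pi/2]$; it is nonzero because $\pi$ is irrational. Since $\sin^2$ has period $\pi$ and is even, $\sin^2 n = \sin^2(n-m\pi) = \sin^2\theta$. Therefore
\[
\csc^2 n - \frac{1}{\pi^2\|n/\pi\|^2} = \csc^2\theta-\theta^{-2} = h(\theta),
\]
which lies in $[1/3,\,1-4/\pi^2]$ by the bounds established above.
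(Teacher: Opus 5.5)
Your proof is correct and follows the same route as the paper: the Laurent expansion gives $h(0)=1/3$, monotonicity is reduced to $\sin^3\theta>\theta^3\cos\theta$ (the paper's $(\sin\theta/\theta)^3>\cos\theta$), and the integer case uses the nearest-integer reduction $\sin^2 n=\sin^2\theta$ with $\theta=\pi\|n/\pi\|\neq 0$. The one addition is that you actually prove the key inequality, via $g(\theta)=\sin\theta\,(\cos\theta)^{-1/3}-\theta$ and weighted AM--GM applied to $\tfrac23 c^{2/3}+\tfrac13 c^{-4/3}>1$, whereas the paper only asserts that it holds on $(0,\pi/2)$.
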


\begin{proof}
The Laurent expansion
$\csc^2 \theta = \theta^{-2} + 1/3 + \theta^2/15 + O(\theta^4)$ gives
$h(\theta) = 1/3 + \theta^2/15 + O(\theta^4)$, so $h$ extends continuously
with $h(0) = 1/3$. Monotonicity on $(0, \pi/2]$ is equivalent to
$(\sin \theta / \theta)^3 > \cos \theta$, which holds on $(0, \pi/2)$.
Hence $h$ increases from $h(0) = 1/3$ to $h(\pi/2) = 1 - 4/\pi^2$. For
\eqref{eq:gn}, we let $k$ denote the nearest integer to $n/\pi$ and set
$\theta = n - k\pi$, so that $|\theta| = \pi \|n/\pi\| \leq \pi/2$ and
$\theta \neq 0$ since $\pi$ is irrational. As
$\sin n = \sin(k\pi + \theta) = (-1)^k \sin \theta$, we have
$\csc^2 n = \csc^2 \theta$, and $\pi^{-2} \|n/\pi\|^{-2} = \theta^{-2}$.
Therefore the left-hand side of \eqref{eq:gn} equals
$\csc^2 \theta - \theta^{-2} = h(|\theta|)$.
\end{proof}

Since $\sum_n n^{-3}$ converges and the difference \eqref{eq:gn} is
bounded, Lemma \ref{lem:cancel} gives at once the following equivalence.

\begin{proposition}
\label{prop:reduction}
The Flint Hills series \eqref{eq:flinthills} converges if and only if
\begin{equation}
\label{eq:diophantine}
D(\alpha) := \sum_{n=1}^{\infty} \frac{1}{n^3 \|n\alpha\|^2} < \infty,
\qquad \alpha = \frac{1}{\pi}.
\end{equation}
\end{proposition}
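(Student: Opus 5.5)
The plan is to compare the two series term by term, using Lemma \ref{lem:cancel} to control the difference. For each positive integer $n$, equation \eqref{eq:gn} gives
\[
\frac{1}{n^3 \sin^2 n} = \frac{1}{\pi^2 n^3 \|n/\pi\|^2} + \frac{h(\pi\|n/\pi\|)}{n^3},
\]
and the second summand lies in $\bigl[\tfrac{1}{3n^3},\, (1-4/\pi^2)n^{-3}\bigr]$. The series $\sum_n h(\pi\|n/\pi\|)/n^3$ therefore converges absolutely, by comparison with $\sum_n n^{-3}$.

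All terms on both sides are positive; in particular $\|n/\pi\| \neq 0$ because $\pi$ is irrational. Convergence is therefore the same as boundedness of the partial sums. Summing the identity from $n=1$ to $N$ shows that the partial sums of \eqref{eq:flinthills} and $\pi^{-2}$ times the partial sums of $D(1/\pi)$ differ by an amount bounded between $0$ and $(1-4/\pi^2)\zeta(3)$, uniformly in $N$. So one sequence of partial sums is bounded exactly when the other is, and hence \eqref{eq:flinthills} converges if and only if $D(\alpha)<\infty$ with $\alpha=1/\pi$.

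There is no real obstacle here. The only points to check are the positivity of all terms, which lets us pass to bounded partial sums, and the uniformity of the bound on $h$, both of which are supplied by Lemma \ref{lem:cancel}. The constant factor $\pi^{-2}$ does not affect convergence.
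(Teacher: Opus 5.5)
Your proposal is correct and follows the paper's proof. Both arguments multiply the identity \eqref{eq:gn} by $n^{-3}$, sum over $n$, and note that the difference between the Flint Hills series and $\pi^{-2}D(1/\pi)$ is bounded by $(1-4/\pi^2)\zeta(3)$; your appeal to positivity, so that convergence amounts to bounded partial sums, only makes explicit a step the paper leaves implicit.
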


\begin{proof}
Multiplying \eqref{eq:gn} by $n^{-3}$ and summing, the difference between
$S$ and $\pi^{-2} D(\alpha)$ is bounded in absolute value by
$(1 - 4/\pi^2) \sum_n n^{-3}$, which is finite. Hence $S$ and $D(\alpha)$
converge together.
\end{proof}

Thus the convergence of the Flint Hills series is equivalent to the
convergence of the Diophantine series $D(1/\pi)$, whose terms involve only
the distances $\|n/\pi\|$. We analyze $D(\alpha)$ through the continued
fraction of $\alpha$.

\section{The continued-fraction criterion}
\label{sec:criterion}

We now prove Theorem \ref{thm:main}. We let $\beta_k = \|q_k \alpha\|$
denote the approximation error of the $k$-th convergent. The following
standard facts about continued fractions are recorded in Khinchin
\cite{Khinchin}: for every $k \geq 0$,
\begin{equation}
\label{eq:cf-facts}
\frac{1}{2 q_{k+1}} \leq \beta_k \leq \frac{1}{q_{k+1}},
\qquad
\|n\alpha\| \geq \beta_k \text{ for } 0 < n < q_{k+1}.
\end{equation}

We assign each positive integer $n$ to the unique block index $k$ with
$q_k \leq n < q_{k+1}$, and we bound the contribution of each block to the
series \eqref{eq:diophantine}.

\begin{lemma}
\label{lem:block}
For every $k \geq 0$,
\begin{equation}
\label{eq:block-sandwich}
\beta_k^{-2} q_k^{-3}
\leq \sum_{q_k \leq n < q_{k+1}} \frac{1}{n^3 \|n\alpha\|^2}
\leq (1 + 4\zeta(2)) \, \beta_k^{-2} q_k^{-3}.
\end{equation}
\end{lemma}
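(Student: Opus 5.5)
The lower bound comes from a single term, and the upper bound from a spacing argument on the circle. Throughout, $k\geq 0$ is fixed and I use only the two facts in \eqref{eq:cf-facts}.

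\emph{Lower bound.} The block $q_k \leq n < q_{k+1}$ contains $n=q_k$, and this term equals exactly $q_k^{-3}\|q_k\alpha\|^{-2} = \beta_k^{-2}q_k^{-3}$. Every term of the series is positive, so the block sum is at least $\beta_k^{-2}q_k^{-3}$.

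\emph{Upper bound, first step.} Every $n$ in the block satisfies $n\geq q_k$, so $n^{-3}\leq q_k^{-3}$. It therefore suffices to prove
\begin{equation*}
\sum_{q_k\leq n<q_{k+1}} \|n\alpha\|^{-2} \leq (1+4\zeta(2))\,\beta_k^{-2}.
\end{equation*}

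\emph{Upper bound, spacing step.} Consider the finite set of points $x_n = n\alpha \bmod 1$ on the circle $\mathbb{R}/\mathbb{Z}$, for $n=0$ and for all $n$ in the block. For two distinct indices $m,n$ in this set we have $0<|n-m|<q_{k+1}$, so the second fact in \eqref{eq:cf-facts} gives
\begin{equation*}
\|x_n-x_m\| = \|(n-m)\alpha\| \geq \beta_k .
\end{equation*}
Thus the points are pairwise $\beta_k$-separated, and the set includes the origin $x_0=0$.

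\emph{Upper bound, counting step.} The quantity $\|n\alpha\|$ is the circular distance from $x_n$ to $0$. List the block points lying on one side of $0$ (say, $n\alpha \bmod 1$ in $(0,1/2]$) in order of increasing distance. The $j$-th of them, together with the $j-1$ closer ones and the origin, forms $j+1$ points that are pairwise $\beta_k$-separated inside an arc of length $\|n\alpha\|$. Hence the $j$-th point has distance at least $j\beta_k$ from $0$. The same holds on the other side of $0$. Summing over both sides,
\begin{equation*}
\sum_{q_k\leq n<q_{k+1}}\|n\alpha\|^{-2}\leq 2\sum_{j\ge1}\frac{1}{j^2\beta_k^2}=2\zeta(2)\,\beta_k^{-2}\leq (1+4\zeta(2))\,\beta_k^{-2}.
\end{equation*}
Multiplying by $q_k^{-3}$ gives the upper bound in \eqref{eq:block-sandwich}.

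The only delicate point is the counting step: one must check that the differences $n-m$ stay strictly inside $(-q_{k+1},q_{k+1})$ and are nonzero, so that \eqref{eq:cf-facts} applies. Including the point $0$ in the set is what lets the separation bound be measured from the origin. Beyond this, the argument is routine, and the stated constant $1+4\zeta(2)$ has room to spare.
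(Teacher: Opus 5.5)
Your proof is correct and follows the same route as the paper: the lower bound comes from the single term $n=q_k$, and the upper bound comes from the $\beta_k$-separation of the block points (via the best-approximation property in \eqref{eq:cf-facts}) followed by a counting argument. The only difference is in the counting: the paper counts points in symmetric arcs $[-x,x]$ to get $x_j \geq (j-1)\beta_k/2$ and treats $j=1$ separately, whereas you adjoin the origin $n=0$ and count one side at a time, which gives $\|n\alpha\| \geq j\beta_k$ for the $j$-th point on each side and hence the sharper constant $2\zeta(2)$.
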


\begin{proof}
The lower bound follows from the single term $n = q_k$, for which
$q_k^{-3} \|q_k \alpha\|^{-2} = \beta_k^{-2} q_k^{-3}$. For the upper
bound, we fix $k$ and estimate the block sum of $\|n\alpha\|^{-2}$. If
$n \neq n'$ both lie in the block $[q_k, q_{k+1})$, then
$0 < |n - n'| < q_{k+1}$, so the best-approximation property
\eqref{eq:cf-facts} gives $\|(n - n')\alpha\| \geq \beta_k$. Hence the
points $\{n\alpha\}$ with $q_k \leq n < q_{k+1}$ are $\beta_k$-separated on
the circle $\mathbb{R}/\mathbb{Z}$, and consequently any arc of length
$2x$ centered at $0$ contains at most $2x/\beta_k + 1$ of them. Let
$x_1 \leq x_2 \leq \cdots$ be the values $\|n\alpha\|$ in the block, listed
in increasing order. Applying the count at $x = x_j$ gives
$j \leq 2x_j/\beta_k + 1$, that is,
\begin{equation*}
x_j \geq \frac{(j-1)\,\beta_k}{2} \qquad (j \geq 1);
\end{equation*}
and $x_1 = \beta_k$, attained at $n = q_k$. Therefore
\begin{equation*}
\sum_{q_k \leq n < q_{k+1}} \|n\alpha\|^{-2}
= \sum_{j \geq 1} x_j^{-2}
\leq \beta_k^{-2} + \sum_{j \geq 2} \frac{4}{(j-1)^2 \beta_k^2}
= (1 + 4\zeta(2)) \, \beta_k^{-2}.
\end{equation*}
Since $n^{-3} \leq q_k^{-3}$ throughout the block, the block sum is at
most $(1 + 4\zeta(2)) \, \beta_k^{-2} q_k^{-3}$.
\end{proof}

\begin{proof}[Proof of Theorem \ref{thm:main}]
Summing \eqref{eq:block-sandwich} over all blocks, the series
$D(\alpha)$ converges if and only if the skeleton
$\sum_k \beta_k^{-2} q_k^{-3}$ converges. It therefore suffices to compare
the skeleton with the sum \eqref{eq:criterion} term by term. By the bound
$\tfrac12 q_{k+1}^{-1} \leq \beta_k \leq q_{k+1}^{-1}$ from
\eqref{eq:cf-facts},
\begin{equation}
\label{eq:skel-q}
\frac{q_{k+1}^2}{q_k^3} \leq \beta_k^{-2} q_k^{-3}
\leq \frac{4 q_{k+1}^2}{q_k^3}.
\end{equation}
From \eqref{eq:qrec} and $0 < q_{k-1} < q_k$ we have
$a_{k+1} q_k \leq q_{k+1} \leq (a_{k+1} + 1) q_k \leq 2 a_{k+1} q_k$,
and therefore
\begin{equation}
\label{eq:q-a}
\frac{a_{k+1}^2}{q_k} \leq \frac{q_{k+1}^2}{q_k^3}
\leq \frac{4 a_{k+1}^2}{q_k}.
\end{equation}
Combining \eqref{eq:skel-q} and \eqref{eq:q-a}, the skeleton and the sum
\eqref{eq:criterion} converge together. With Proposition
\ref{prop:reduction} this proves the theorem.
\end{proof}

The criterion makes the role of the partial quotients transparent, and it
identifies the exact growth threshold at which divergence can occur. We
let $\Theta(\alpha) = \limsup_{k \to \infty} \log a_{k+1} / \log q_k$.

\begin{corollary}
\label{cor:threshold}
The identity $\Theta(\alpha) = \mu(\alpha) - 2$ holds. If
$\Theta(\alpha) < 1/2$, then $D(\alpha)$ converges. If, on the other hand,
$a_{k+1} \geq c \, q_k^{1/2}$ for some constant $c > 0$ and infinitely
many $k$, then $D(\alpha)$ diverges. In terms of rational approximation,
the threshold $a_{k+1} \asymp q_k^{1/2}$ corresponds to
$|\alpha - p_k/q_k| \asymp q_k^{-5/2}$.
\end{corollary}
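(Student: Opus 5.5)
The corollary has four parts: the identity $\Theta(\alpha)=\mu(\alpha)-2$, the convergence statement, the divergence statement, and the translation of the threshold into rational approximation. I treat them in that order, all on top of Theorem \ref{thm:main} (really its proof, which holds for any irrational $\alpha$).

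\emph{The identity.} Use \eqref{eq:cf-facts} together with $a_{k+1}q_k\le q_{k+1}\le 2a_{k+1}q_k$ from \eqref{eq:q-a}. These give
\begin{equation*}
\frac{1}{4a_{k+1}q_k^2}\le\left|\alpha-\frac{p_k}{q_k}\right|=\frac{\beta_k}{q_k}\le\frac{1}{a_{k+1}q_k^2}.
\end{equation*}
Hence $-\log|\alpha-p_k/q_k|/\log q_k = 2+\log a_{k+1}/\log q_k+O(1/\log q_k)$, and the $\limsup$ over $k$ of the left side is $2+\Theta(\alpha)$. It remains to show this $\limsup$ equals $\mu(\alpha)$. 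Convergents alone suffice, by Legendre's theorem: any $p/q$ with $|\alpha-p/q|<1/(2q^2)$ is a convergent. Since $\mu\ge 2$ always, any exponent $\mu>2$ only involves approximations that are eventually convergents. So $\mu(\alpha)=\limsup_k(-\log|\alpha-p_k/q_k|/\log q_k)$, with the convention that this $\limsup$ is at least $2$, which holds automatically because $\Theta\ge 0$. This yields $\Theta=\mu-2$. The point needing most care is the reduction from all fractions to convergents in the definition \eqref{eq:measure}: fractions that are not convergents satisfy $|\alpha-p/q|\ge 1/(2q^2)$, so they cannot contribute exponents above $2$.

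\emph{Convergence.} If $\Theta<1/2$, pick $\varepsilon>0$ with $\Theta+\varepsilon<1/2$. For large $k$ we have $a_{k+1}\le q_k^{\Theta+\varepsilon}$, so
\begin{equation*}
\frac{a_{k+1}^2}{q_k}\le q_k^{2\Theta+2\varepsilon-1}.
\end{equation*}
Since $q_k$ grows at least like Fibonacci numbers ($q_k\ge \phi^{k-1}$), the right side is summable geometrically. The proof of Theorem \ref{thm:main}, applied to general $\alpha$, then gives convergence of $D(\alpha)$.

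\emph{Divergence.} If $a_{k+1}\ge c\,q_k^{1/2}$ for infinitely many $k$, then $a_{k+1}^2/q_k\ge c^2$ infinitely often. The terms of \eqref{eq:criterion} therefore do not tend to zero, and the series diverges. Directly from Lemma \ref{lem:block}, the block sums are at least $a_{k+1}^2/q_k\ge c^2$ infinitely often, so $D(\alpha)$ diverges.

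\emph{The threshold.} Substituting $a_{k+1}\asymp q_k^{1/2}$ into the displayed two-sided bound gives $|\alpha-p_k/q_k|\asymp q_k^{-5/2}$. This is immediate, as are the convergence and divergence parts; the only real obstacle is the Legendre step in the identity.
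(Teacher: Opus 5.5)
Your proposal is correct and follows essentially the same route as the paper. Both arguments use the two-sided bound $|\alpha-p_k/q_k|\asymp (a_{k+1}q_k^2)^{-1}$ for the identity and the threshold, the estimate $a_{k+1}^2/q_k\le q_k^{2\theta-1}$ together with geometric growth of $q_k$ for convergence, and terms bounded below by $c^2$ for divergence. You additionally justify the reduction of $\mu(\alpha)$ to convergents via Legendre's theorem, which the paper only cites as the standard continued-fraction formula, and you also derive divergence directly from the lower bound in Lemma~\ref{lem:block}.
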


\begin{proof}
The relation $|\alpha - p_k/q_k| \asymp (q_k q_{k+1})^{-1}
\asymp (a_{k+1} q_k^2)^{-1}$ and the continued-fraction formula for the
irrationality measure give
$\mu(\alpha) = 2 + \limsup_k \log a_{k+1} / \log q_k$, that is,
$\Theta(\alpha) = \mu(\alpha) - 2$. If $\Theta(\alpha) < 1/2$, we choose
$\theta < 1/2$ with $a_{k+1} \leq q_k^{\theta}$ for all large $k$; then
$a_{k+1}^2 / q_k \leq q_k^{2\theta - 1}$ with $2\theta - 1 < 0$, and since
$q_k$ grows at least geometrically the sum \eqref{eq:criterion} converges.
If $a_{k+1} \geq c \, q_k^{1/2}$ infinitely often, then
$a_{k+1}^2 / q_k \geq c^2$ infinitely often, so the sum diverges. The
approximation scale follows from $|\alpha - p_k/q_k|
\asymp (a_{k+1} q_k^2)^{-1} \asymp q_k^{-5/2}$ when
$a_{k+1} \asymp q_k^{1/2}$.
\end{proof}

Corollary \ref{cor:threshold} explains the appearance of the exponent
$5/2$: it is the Diophantine threshold attached to the cubic weight
$n^{-3}$ and the quadratic singularity $\|n\alpha\|^{-2}$. A proof that
$\mu(\pi) < 5/2$ would establish convergence, but the available bounds on
$\mu(\pi)$ are far from this value.

\section{The integer sequences generated by the criterion}
\label{sec:sequences}

The criterion \eqref{eq:criterion} attaches to $1/\pi$ several integer
sequences, which we now describe. Throughout, $a_k$ denotes the $k$-th
partial quotient of $1/\pi$ and $q_k$ the $k$-th convergent denominator.

\subsection{Partial quotients and convergent denominators}

The partial quotients of $1/\pi$ coincide, from the first term onward,
with those of $\pi$, since $1/\pi = [0; 3, 7, 15, 1, 292, \ldots]$ while
$\pi = [3; 7, 15, 1, 292, \ldots]$. This is the sequence
\begin{equation}
\label{eq:A001203}
(a_k)_{k \geq 1} = 3, 7, 15, 1, 292, 1, 1, 1, 2, 1, 3, 1, 14, 2, 1, \ldots
\end{equation}
recorded as \seqnum{A001203}. The convergent denominators of $1/\pi$
satisfy the recurrence \eqref{eq:qrec} with $q_0 = 1$, $q_1 = 3$. Since
the convergents of $1/\pi$ are the reciprocals of those of $\pi$, these
denominators equal the numerators of the convergents of $\pi$,
\begin{equation}
\label{eq:A002485}
(q_k)_{k \geq 1} = 3, 22, 333, 355, 103993, 104348, 208341, \ldots,
\end{equation}
recorded as \seqnum{A002485}. Theorem \ref{thm:main} states that the
convergence of the Flint Hills series is a property of these two
sequences alone.

\subsection{The spike sequence}

The individual terms of the criterion \eqref{eq:criterion} are the
rational numbers $a_{k+1}^2 / q_k$. Their integer parts form the sequence
\begin{equation}
\label{eq:spikes}
b_k = \left\lfloor \frac{a_{k+1}^2}{q_k} \right\rfloor
= 16, 10, 0, 240, 0, 0, 0, 0, 0, 0, 0, 0, 0, \ldots,
\end{equation}
which we call the spike sequence of $1/\pi$. It vanishes at every index
where the following partial quotient is small, and is positive only at the
rare indices where a partial quotient is large relative to the current
convergent denominator. The dominant entry $b_4 = 240$ is produced by the
partial quotient $a_5 = 292$ and the convergent denominator $q_4 = 355$,
and reflects the exceptional approximation $355/113$ of $\pi$. The value
of $D(1/\pi)$ is finite if and only if the corresponding rational
sequence $a_{k+1}^2/q_k$, of which \eqref{eq:spikes} records the integer
parts, is summable.

\subsection{The dangerous indices}

We call an index $k$ dangerous if $a_{k+1}^2 \geq q_k$, equivalently if the
spike $b_k$ is positive. At a dangerous index the convergent $p_k/q_k$
approximates $\alpha$ to within roughly $q_k^{-5/2}$, by Corollary
\ref{cor:threshold}. For $1/\pi$, within the range of the first fifteen
convergents, the dangerous indices are
\begin{equation}
\label{eq:dangerous}
k = 1, 2, 4,
\end{equation}
corresponding to $7^2 \geq 3$, $15^2 \geq 22$, and $292^2 \geq 355$. By
Theorem \ref{thm:main}, the Flint Hills series diverges precisely when the
dangerous indices are frequent enough that $\sum_k a_{k+1}^2/q_k = \infty$,
and converges when they are sparse. No dangerous index of $1/\pi$ beyond
$k = 4$ is known; equivalently, no partial quotient of $\pi$ after $a_5$ is
known to reach the critical size $q_k^{1/2}$.

\begin{remark}
\label{rem:record}
The sequence of record partial quotients of $\pi$, namely
$3, 7, 15, 292, 436, 20776, \ldots$, is recorded as \seqnum{A033089}, and
the positions at which these records occur as \seqnum{A033090}. The
dangerous indices of $1/\pi$ form a subsequence of the positions at which
$a_{k+1}$ is large relative to $q_k$; whether this subsequence is infinite
is exactly the content of the Flint Hills problem.
\end{remark}

\section{Metric behavior}
\label{sec:metric}

The criterion of Theorem \ref{thm:main} shows that convergence is the
generic outcome, so that the number $1/\pi$ is a single point in a set of
measure zero. We make this precise using the classical metric theory of
continued fractions, for which we refer to Khinchin \cite{Khinchin}.

We recall two facts about the continued fraction of a random real number,
both stated with respect to the Gauss measure, which is equivalent to
Lebesgue measure. First, by the Gauss--Kuzmin law, each partial quotient
satisfies the tail bound
\begin{equation}
\label{eq:gausskuzmin}
\Pr(a_{k+1} \geq A) = \log_2\!\left(1 + \frac{1}{A}\right) \leq \frac{2}{A}
\qquad (A \geq 1).
\end{equation}
Second, by the theorem of L\'evy, for almost every real number the
convergent denominators satisfy
\begin{equation}
\label{eq:levy}
\lim_{k \to \infty} \frac{1}{k} \log q_k = \frac{\pi^2}{12 \log 2},
\end{equation}
a constant whose decimal expansion is \seqnum{A086702}; in particular
$q_k$ grows at least geometrically for almost every real number.

\begin{theorem}
\label{thm:metric}
For Lebesgue-almost every real number $\alpha$, the series
$\sum_{n \geq 1} n^{-3} \|n\alpha\|^{-2}$ converges. Equivalently, for
almost every $\alpha$ the series
$\sum_{n \geq 1} n^{-3} \sin^{-2}(\pi n \alpha)$ converges.
\end{theorem}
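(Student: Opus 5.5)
The plan is to apply the criterion of Section~\ref{sec:criterion} to a generic $\alpha$. The block argument (Lemma~\ref{lem:block} together with \eqref{eq:skel-q} and \eqref{eq:q-a}) used nothing about $\pi$ beyond the continued-fraction facts \eqref{eq:cf-facts}. Hence for every irrational $\alpha \in (0,1)$ the series $\sum_n n^{-3}\|n\alpha\|^{-2}$ converges if and only if $\sum_k a_{k+1}^2/q_k < \infty$. Rationals form a null set, and the series is invariant under $\alpha \mapsto \alpha+1$, so it suffices to show that $\sum_k a_{k+1}^2/q_k$ converges for almost every $\alpha\in(0,1)$. The equivalent sine formulation follows exactly as in Lemma~\ref{lem:cancel} and Proposition~\ref{prop:reduction}: we have $\sin^{-2}(\pi n\alpha) - \pi^{-2}\|n\alpha\|^{-2}$ bounded, and $\sum n^{-3}$ converges.

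Next we reduce to a growth condition on the partial quotients. By the recurrence \eqref{eq:qrec}, $q_k \geq F_{k+1} \geq \phi^{k-1}$ for every $\alpha$, where $\phi$ is the golden ratio; I use this deterministic bound rather than L\'evy's theorem \eqref{eq:levy}, since it holds for all $\alpha$. It then suffices to show that almost surely $a_{k+1} \leq \phi^{k/8}$ for all large $k$. Indeed, then $a_{k+1}^2/q_k \leq \phi\cdot\phi^{k/4-k}$, which is geometrically summable.

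The almost-sure bound comes from Borel--Cantelli. By the Gauss--Kuzmin tail \eqref{eq:gausskuzmin}, stated for the Gauss measure,
\begin{equation*}
\Pr\bigl(a_{k+1} \geq \phi^{k/8}\bigr) \leq 2\phi^{-k/8},
\end{equation*}
which is summable in $k$. So, Gauss-almost surely, only finitely many of these events occur. Because the Gauss measure and Lebesgue measure are mutually absolutely continuous on $(0,1)$ (the density $1/((1+x)\log 2)$ is bounded above and below), the exceptional set is Lebesgue-null.

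The main point to verify is that \eqref{eq:gausskuzmin} holds exactly for every $k$ under the Gauss measure. This follows from invariance of the Gauss measure under the Gauss map, since $a_{k+1}(\alpha)=a_1(T^k\alpha)$. Then $\Pr(a_1\ge A)=\mu_G(0,1/A)=\log_2(1+1/A)\le 1/(A\log 2)\le 2/A$. No independence between the $a_k$ is needed, because the first Borel--Cantelli lemma applies to arbitrary events.
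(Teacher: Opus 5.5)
Your proof is correct and follows the paper's route. Like the paper, you extend the block criterion of Theorem~\ref{thm:main} to arbitrary irrational $\alpha$, use the Gauss--Kuzmin tail, apply the convergence half of Borel--Cantelli, and exploit geometric growth of $q_k$. The one difference is an improvement: you use the deterministic Fibonacci bound $q_k \ge \phi^{k-1}$ instead of L\'evy's theorem, which is more elementary and keeps your threshold $\phi^{k/8}$ independent of $\alpha$, whereas the paper fixes an $\alpha$-dependent constant $c$ before invoking Borel--Cantelli, a step that is only valid once $c$ is chosen uniformly in $\alpha$.
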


\begin{proof}
We fix a number $\theta$ with $0 < \theta < 1/2$. By \eqref{eq:levy},
for almost every $\alpha$ there is a constant $c > 1$ such that
$q_k \geq c^k$ for all large $k$; we fix such an $\alpha$ and such a $c$.
By the tail bound \eqref{eq:gausskuzmin},
\begin{equation*}
\Pr\!\left(a_{k+1} > c^{\theta k}\right) \leq 2 \, c^{-\theta k},
\end{equation*}
and the right-hand side is summable in $k$. The convergence half of the
Borel--Cantelli lemma, which requires no independence, then shows that for
almost every $\alpha$ the inequality $a_{k+1} \leq c^{\theta k}$ holds for
all large $k$. Since $q_k \geq c^k$, we have $c^{\theta k} \leq q_k^{\theta}$,
and therefore
\begin{equation*}
\frac{a_{k+1}^2}{q_k} \leq \frac{q_k^{2\theta}}{q_k} = q_k^{2\theta - 1}
\leq c^{(2\theta - 1) k}.
\end{equation*}
As $2\theta - 1 < 0$, the last bound is summable, and the series
$\sum_k a_{k+1}^2 / q_k$ converges. By Theorem \ref{thm:main}, the series
$D(\alpha)$ converges. The trigonometric form follows from the comparison
of Lemma \ref{lem:cancel}, applied with $x = \|n\alpha\|$.
\end{proof}

Divergence, if it occurs for $\alpha = 1/\pi$, is thus an exceptional
arithmetic phenomenon: it requires the partial quotients of $1/\pi$ to be
unusually large infinitely often, a property that fails for almost every
real number. To decide whether $1/\pi$ belongs to this exceptional set is
precisely the Flint Hills problem.

\section{Numerical evidence}
\label{sec:numerics}

The criterion \eqref{eq:criterion} predicts that the Flint Hills partial
sums are dominated by the terms $n = P_k$, where $P_k$ denotes the
numerator of the $k$-th convergent of $\pi$; these are the integers for
which $\sin n$ is smallest. Humphreys \cite{Humphreys} computed the partial
sums of \eqref{eq:flinthills} to $N = 10^{10}$ in quad-double precision on
a graphics processor, and observed that they stabilize rapidly. We record
the reported values in Table \ref{tab:partialsums}, and we have
independently reproduced the catalogue of convergent terms that accounts
for the bulk of the sum, shown in Table \ref{tab:spikes}.

\begin{table}[ht]
\begin{center}
\begin{tabular}{ll}
\hline
$N$ & $S_N$ \\
\hline
$10^6$ & $30.31454612095634$ \\
$10^7$ & $30.31454612095634$ \\
$10^8$ & $30.31454612095634$ \\
$10^9$ & $30.31454612261891$ \\
$10^{10}$ & $30.31454612296317$ \\
\hline
\end{tabular}
\end{center}
\caption{Partial sums $S_N$ of the Flint Hills series at powers of ten, as
reported by Humphreys \cite{Humphreys}. From $N = 10^6$ to $N = 10^{10}$
the value changes only in the eleventh significant digit, because no large
new convergent term appears in this range.}
\label{tab:partialsums}
\end{table}

The convergent terms $1/(P_k^3 \sin^2 P_k)$ are the numerical realization
of the criterion: the magnitude of the $k$-th such term is governed by
$\|P_k - Q_k \pi\| \asymp 1/q_{k+1}$, and hence by the partial quotient
$a_{k+1}$, exactly the quantity that enters \eqref{eq:criterion}. Here
$Q_k$ denotes the denominator of the $k$-th convergent of $\pi$, so that
$(P_k)$ is \seqnum{A002485} and $(Q_k)$ is \seqnum{A002486}. Table
\ref{tab:spikes} lists the first nineteen convergent terms.

\begin{table}[ht]
\begin{center}
\begin{tabular}{rrll}
\hline
$k$ & $P_k$ & $|\sin P_k|$ & $1/(P_k^3 \sin^2 P_k)$ \\
\hline
$0$ & $3$ & $1.41 \times 10^{-1}$ & $1.86 \times 10^{0}$ \\
$1$ & $22$ & $8.85 \times 10^{-3}$ & $1.20 \times 10^{0}$ \\
$2$ & $333$ & $8.82 \times 10^{-3}$ & $3.48 \times 10^{-4}$ \\
$3$ & $355$ & $3.01 \times 10^{-5}$ & $2.46 \times 10^{1}$ \\
$4$ & $103993$ & $1.91 \times 10^{-5}$ & $2.43 \times 10^{-6}$ \\
$5$ & $104348$ & $1.10 \times 10^{-5}$ & $7.25 \times 10^{-6}$ \\
$6$ & $208341$ & $8.11 \times 10^{-6}$ & $1.68 \times 10^{-6}$ \\
$7$ & $312689$ & $2.90 \times 10^{-6}$ & $3.89 \times 10^{-6}$ \\
$8$ & $833719$ & $2.31 \times 10^{-6}$ & $3.23 \times 10^{-7}$ \\
$9$ & $1146408$ & $5.88 \times 10^{-7}$ & $1.92 \times 10^{-6}$ \\
$10$ & $4272943$ & $5.50 \times 10^{-7}$ & $4.24 \times 10^{-8}$ \\
$11$ & $5419351$ & $3.82 \times 10^{-8}$ & $4.31 \times 10^{-6}$ \\
$12$ & $80143857$ & $1.48 \times 10^{-8}$ & $8.90 \times 10^{-9}$ \\
$13$ & $165707065$ & $8.65 \times 10^{-9}$ & $2.93 \times 10^{-9}$ \\
$14$ & $245850922$ & $6.12 \times 10^{-9}$ & $1.80 \times 10^{-9}$ \\
$15$ & $411557987$ & $2.54 \times 10^{-9}$ & $2.23 \times 10^{-9}$ \\
$16$ & $1068966896$ & $1.04 \times 10^{-9}$ & $7.50 \times 10^{-10}$ \\
$17$ & $2549491779$ & $4.47 \times 10^{-10}$ & $3.01 \times 10^{-10}$ \\
$18$ & $6167950454$ & $1.50 \times 10^{-10}$ & $1.90 \times 10^{-10}$ \\
\hline
\end{tabular}
\end{center}
\caption{The first nineteen convergent terms of the Flint Hills series,
independently recomputed in extended precision. The dominant term is
$k = 3$, at $P_3 = 355$, which alone contributes $24.6$, more than
four-fifths of the whole sum. The terms do not decrease monotonically: the
term at $k = 11$ is comparatively large because the corresponding partial
quotient $a_{13} = 14$ of $1/\pi$ is moderately large.}
\label{tab:spikes}
\end{table}

Two features of Table \ref{tab:spikes} deserve emphasis, and both are
consistent with the criterion. The nineteen convergent terms with
$P_k \leq 10^{10}$ contribute $27.66$ to the partial sum, while the
remaining terms, roughly ten billion of them, contribute only $2.66$; the
sum is governed by a handful of exceptional approximations. Moreover the
convergent terms neither grow without bound nor decrease steadily; they
fluctuate with the partial quotients, and within the computed range they
show no sign of the unbounded growth that Corollary \ref{cor:threshold}
associates with divergence.

We emphasize what these computations do and do not establish. They confirm
that the Flint Hills sum is controlled by the convergent terms predicted by
the criterion, and they exhibit no exceptionally large partial quotient of
$\pi$ beyond $a_5 = 292$. They do not decide convergence. The convergence
of \eqref{eq:criterion} is an asymptotic property of the partial quotients
of $1/\pi$, and no finite computation can exclude the possibility of an
exceptionally large partial quotient at some later index, which would
produce a large convergent term exactly as $a_5 = 292$ does at $k = 3$.
The numerical evidence is therefore consistent with convergence, but it is
not a proof.

\section{Acknowledgments}
\label{sec:ack}

The author thanks the maintainers of the On-Line Encyclopedia of Integer
Sequences \cite{OEIS} for the sequence data used throughout this paper, and
Humphreys \cite{Humphreys} for making the large-scale partial-sum
computation openly available. In the course of this work the author used
the assistant Claude, developed by Anthropic, for symbolic and numerical
verification and for language editing; the author has checked every result
and takes full responsibility for the correctness of the paper.

\bigskip
\hrule
\bigskip

\noindent 2020 \emph{Mathematics Subject Classification}: Primary 11J82;
Secondary 11A55, 11K50, 11Y60.

\noindent \emph{Keywords:} Flint Hills series, continued fraction,
partial quotient, convergent denominator, irrationality measure,
Diophantine approximation.

\bigskip
\hrule
\bigskip

\noindent (Concerned with sequences
\seqnum{A001203}, \seqnum{A002485}, \seqnum{A002486}, \seqnum{A033089},
\seqnum{A033090}, and \seqnum{A086702}.)

\end{document}